\documentclass[11pt,reqno]{amsart}
\usepackage{amssymb,mathrsfs,mathtools}
\mathtoolsset{showonlyrefs}
\usepackage{hyperref}
\usepackage[shortlabels]{enumitem}
\usepackage{geometry}

\usepackage{tikz,lipsum,lmodern}
\usepackage[most]{tcolorbox}
\usepackage{xcolor}

\numberwithin{equation}{section}

\newtheorem{mainthm}{Theorem}
\newtheorem{thm}{Theorem}[section]

\newtheorem{lem}[thm]{Lemma}

\newtheorem{con}[thm]{Conjecture}
\theoremstyle{remark}

\def\bR {\mathbb{R}}

\def\bZ {\mathbb{Z}}

\def\grad {{\nabla}}

\newcommand{\ud}{\mathrm{d}}
\newcommand{\supp}{\operatorname{supp}}

\title{Calabi--Yau Conjecture for Minimal Hypersurfaces in $\bR^4$ with bounded geometry}
\author{Shrey Aryan}
\author{Alexander D. McWeeney}
\author{Giuseppe Tinaglia}

\begin{document}
\begin{abstract}
The Calabi--Yau conjectures ask whether every complete minimal hypersurface $\Sigma^n\subset\mathbb R^{n+1}$, $n\geq2$, must be unbounded and, more strongly, proper. In this work, we resolve these conjectures for complete, connected, embedded minimal hypersurfaces $ \Sigma^3 \subset \mathbb{R}^4$ with bounded second fundamental form and finite second Betti number \(b_2(\Sigma;\mathbb Z_2)<\infty\).
\end{abstract}
\maketitle

\section{Introduction}
The question of whether completeness forces a minimal hypersurface in $\mathbb{R}^{n+1}$ to be unbounded goes back to Calabi, who in \cite{Ca} conjectured the following:
\begin{con}\label{con:1}
Let $n\geq 2$. A complete minimal hypersurface $\Sigma^{n}\subset \mathbb{R}^{n+1}$ must be unbounded.
\end{con}
For minimal immersions in $\mathbb{R}^3$, Nadirashvili \cite{Na1} constructed a complete minimal immersion of the disk whose image is contained in the unit ball. Later in \cite[p.~360]{Ya2}, Yau asked whether the examples constructed by Nadirashvili are embedded. Colding and Minicozzi answered this question in \cite{CM10} by proving that every complete embedded minimal surface in $\mathbb{R}^3$ with finite topology is proper, consequently proving a stronger version of Conjecture~\ref{con:1}. Their work built on the deep structure theory of embedded minimal surfaces that they had developed in a series of papers \cite{coldingMinicozzi2004I,coldingMinicozzi2004II, coldingMinicozzi2004III,coldingMinicozzi2004IV,coldingMinicozzi2015V}. Subsequently, Meeks and Rosenberg \cite{MRlamination} proved that every complete embedded minimal surface in $\mathbb{R}^3$ with positive injectivity radius is proper, and Meeks, Pérez, and Ros \cite{MPRCY} proved that a complete embedded
minimal surface of finite genus with an infinite number of ends is proper if and only if it has at most two limit ends. See also \cite{MeeksTinagliaChordArc,MeeksTinagliaGeometry,TinagliaZhouRadius}
for related results on constant mean curvature surfaces.

The properness of complete embedded minimal hypersurfaces in higher dimensions is not well understood because of several obstacles, including the failure of the half-space theorem and the absence of tools and properties available in dimension two, such as the Weierstrass representation, the Gauss--Bonnet formula, and parabolicity. The main result of this paper provides a first answer in this direction.
\begin{mainthm}\label{thm:main}
Let $\Sigma^3\subset \bR^4$ be a connected, complete, embedded minimal hypersurface. Assume that $\sup_\Sigma |A_\Sigma|<\infty$ and $b_2(\Sigma;\mathbb{Z}_2)<\infty$, where $b_2(\Sigma;\mathbb{Z}_2)=\dim_{\mathbb Z_2} H_2(\Sigma;\mathbb{Z}_2)$ denotes the second Betti number. Then $\Sigma$ is proper.
\end{mainthm}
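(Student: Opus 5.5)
We argue by contradiction. Suppose $\Sigma$ is not proper. Since $\sup_\Sigma|A_\Sigma|<\infty$, the standard lamination argument applies: $\Sigma$ has uniform local graphical charts of size $r_0\sim 1/\sup|A_\Sigma|$ over its tangent hyperplanes, and by embeddedness these charts are disjoint graphs with uniformly bounded curvature. Hence the closure $\overline{\Sigma}$ is a minimal lamination $\mathcal L$ of $\bR^4$ whose leaves have the same curvature bound. Non-properness gives a point $p\in\overline\Sigma$ that is an accumulation point of infinitely many distinct sheets of $\Sigma$ in a fixed ball $B(p,r_0)$. The leaf $L$ of $\mathcal L$ through $p$ is then a limit leaf. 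By the stability of limit leaves (Meeks--Pérez--Ros style: a limit leaf has a two-sided cover that is stable, via a positive Jacobi field obtained by normalizing differences of nearby sheets), $L$ is a complete stable minimal hypersurface in $\bR^4$ with bounded second fundamental form.

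Next we classify $L$. Complete two-sided stable minimal hypersurfaces in $\bR^4$ are flat hyperplanes, by the recent resolution of the stable Bernstein problem in $\bR^4$ (Chodosh--Li; also Catino--Mastrolia--Roncoroni). If $L$ is one-sided, we pass to its orientable double cover, which is stable and therefore flat as well. So $L$ is a hyperplane $P$, and $\Sigma$ accumulates on $P$ from at least one side. Using the curvature bound and the graphical charts, we show that near any compact piece of $P$ the surface $\Sigma$ consists of sheets that are graphs over $P$ converging smoothly to $P$. Consequently, on arbitrarily large balls $B_R\cap P$, some sheets of $\Sigma$ are graphs over large regions of $P$.

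The contradiction should come from the topology. I plan to show that each sheet accumulating on $P$ is far from being a global graph. Since $\Sigma$ is connected and not contained in a slab bounded by $P$ and a parallel hyperplane (unless it is itself a hyperplane, which is trivially proper), each sheet must leave the graphical region somewhere. Where it does, $\Sigma$ has to turn around, and I want to show that this produces nontrivial $2$-cycles. The idea is that a region where a sheet over $P$ connects to a neighbouring sheet, seen in a transversal $\bR^3$-slice, forms a neck or handle. Combined with separation arguments for hypersurfaces in $\bR^4$, a compact region of $\Sigma$ bounded between two consecutive graphical sheets should carry an embedded closed surface that is nonzero in $H_2(\Sigma;\mathbb Z_2)$. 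To detect nontriviality, we use intersection numbers mod $2$ with arcs joining distinct sheets through the complement. Infinitely many sheets would then give infinitely many independent classes, contradicting $b_2<\infty$. Equivalently, $b_2<\infty$ lets us find a compact $K\subset\Sigma$ outside of which every embedded closed surface is null-homologous. We would then use this to show that outside $K$ the sheets near $P$ cannot be joined to one another, so infinitely many ends or components appear, which contradicts connectedness through $K$ together with the curvature bound.

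The main obstacle is this last topological step: converting the $b_2$ hypothesis into a genuine obstruction to infinitely many sheets stacking on $P$. There is no Gauss--Bonnet formula or Weierstrass representation available, and the half-space theorem fails in $\bR^4$. Before attempting the homological argument, I would first try a more analytic route. Take the region $W$ of $\bR^4$ between $P$ and a nearby sheet, and use $\Sigma\cap W$ as a barrier in a Plateau/minimization problem. This would produce a stable minimal hypersurface $S$ with boundary control. Then $b_2<\infty$, together with Alexander duality, would be used to show that $S$ is complete and two-sided, hence flat by the stable Bernstein theorem. A flat $S$ lying strictly between $P$ and $\Sigma$ would then be used to separate sheets, forcing a contradiction with the accumulation at $p$.
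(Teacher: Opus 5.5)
Your opening step recovers part of the paper's starting point, which the paper imports from \cite[Lemma~2.11]{aryanMcWeeney2026calabi}: a limit leaf is stable, hence a hyperplane $P=\{x_4=0\}$, and $h=x_4|_\Sigma$ is a positive harmonic function with $\inf_\Sigma h=0$. The paper additionally uses that $\Sigma$ is proper in the open half-space or slab $U$. Your assertion that $\Sigma$ is not contained in a slab bounded by $P$ is unjustified: the half-space theorem fails in $\bR^4$ (catenoids lie in slabs), so the slab case must be handled.

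The genuine gap is the contradiction step, which you flag as the main obstacle, and the mechanism you propose cannot work. The hypothesis $b_2<\infty$ only sees \emph{compact} cycles. Where sheets ``turn around'', the relevant pieces of $\Sigma$ may be noncompact and carry no $\mathbb Z_2$-homology at all. No purely topological version of this step can succeed. For $\Sigma^2\times\bR$ one has $H_2=0$ however the sheets are arranged, so your step would have to contain the analytic content of Rosenberg's properness theorem \cite{rosenberg2001intersection}. Your reformulation is also false: $b_2<\infty$ does not give a compact $K$ outside of which every closed surface is null-homologous. The catenoid in $\bR^4$, diffeomorphic to $S^2\times\bR$, has homologically nontrivial spheres in both ends. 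The Plateau alternative is too vague to check. There is no $\Sigma$-free region ``between $P$ and a nearby sheet'', since sheets accumulate over all of $P$, and nothing explains why a minimizer would be complete.

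What is missing is an analytic mechanism, with $b_2$ playing only a minor role.
\begin{enumerate}[(i)]
\item Bounded curvature gives $|\grad^2_\Sigma h|\le\Lambda$, and positivity of $h$ then forces $|\grad_\Sigma h|^2\le 2\Lambda h$. Hence, for small regular $\tau$, each component $V$ of $\{h<\tau\}$ is, via \cite[Lemma~1.4]{meeks2005uniqueness}, a \emph{global} graph of some $u$ over a domain $D\subset\bR^3$, with $u=\tau$ on $\partial D$ and $|Du|<1$. This is the opposite of your expectation that sheets are far from global graphs.
\item By the maximum principle, the compact components of a regular level set of $h$ are independent in $H_2(\Sigma;\mathbb Z_2)$, hence finitely many. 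So one can choose $V$ with a noncompact boundary piece $S\subset\{h=\tau\}$ and a noncompact $\Gamma\subset V\cap\{h=t\}$, where $0<t<\tau$.
\item Testing $\Delta_\Sigma\tfrac12(\tau-h)^2=|\grad_\Sigma h|^2$ against the superharmonic weight $(1+|X|^2)^{-1/2}$ gives $\int_{\bR^3}|\grad g|^2(1+|y|)^{-1}\,\ud y<\infty$ for the $1$-Lipschitz function $g=(\tau-u)\mathbf 1_D$.
\item But $g=0$ on $\pi(S)$ and $g=\tau-t$ on $\pi(\Gamma)$, two connected unbounded sets meeting every large sphere. The capacity estimate \cite[Lemma~5.8]{aryanMcWeeney2026topological} then gives $\int_{\partial B_r}|\grad g|^2\ge c(\tau-t)^2/\log(Cr/\rho_0)$ with $\rho_0=(\tau-t)/8$, and integrating this against $\ud r/(1+r)$ diverges.
\end{enumerate}
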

In the surface case, the bounded $|A|$ assumption is much more restrictive. Indeed, using Xavier's half-space theorem, Rosenberg \cite{rosenberg2001intersection} showed that every injectively immersed complete minimal surface in $\mathbb{R}^3$ with bounded $|A|$ is proper. In higher dimensions, however, higher-dimensional catenoids provide counterexamples to a higher-dimensional analogue of Xavier's half-space theorem. The finiteness hypothesis on $b_2(\Sigma;\mathbb{Z}_2)$ is natural in view of the nonproper, complete, embedded minimal hypersurfaces in $\mathbb{R}^{n+1}$ constructed by the first two authors for $n\geq 3$, which have unbounded curvature and infinite topology. In particular, when $n=3$, these examples satisfy $b_2(\Sigma;\mathbb{Z}_2)=\infty$. Apart from the nonproper examples described above, several standard families of complete embedded minimal hypersurfaces in $\bR^4$ satisfy
the hypotheses of Theorem \ref{thm:main}. These include the three-dimensional catenoid \cite{blair1975catenoid}, the examples asymptotic to Simons cones in \cite{mazet2014simons}, and embedded minimal hypersurfaces with finitely many ends constructed in \cite{coutant2012deformation} and  \cite{aryanMcWeeney2026calabi}. Our main theorem also applies to every product $\Sigma^2\times\bR$, where $\Sigma^2\subset\bR^3$ is a complete embedded minimal surface with bounded curvature. Indeed, $H_2(\Sigma^2\times\bR;\bZ_2)=0$, even when $\Sigma^2$ has infinite topology. Our main theorem therefore also gives a new proof of properness for complete embedded minimal surfaces in $\mathbb{R}^3$ with bounded curvature, without appealing, as Rosenberg did in \cite{rosenberg2001intersection}, to Xavier's half-space theorem.

The bounded curvature assumption in our main theorem is also subtle. First, if one were to remove this assumption, then by considering minimal hypersurfaces of the form $\tilde\Sigma^2\times \mathbb{R}$, where $\tilde \Sigma^2\subset \mathbb{R}^3$, Theorem~\ref{thm:main} would imply that complete embedded minimal surfaces in $\mathbb{R}^3$ are proper. On the other hand, a folklore example of Meeks, discussed in \cite{alarcon2008density}, may suggest that such a statement is false. Therefore, we expect that removing the bounded curvature assumption would require imposing stronger topological assumptions than those in our main theorem. In particular, we expect that one should also impose $\operatorname{dim}H_1(\Sigma;\mathbb{Z}_2)<\infty$. This condition seems to be consistent with the conjecture of Meeks--Pérez--Ros (cf. \cite{alarcon2008density}), which states that complete embedded minimal surfaces in $\mathbb{R}^3$ with finite genus are proper.

\subsection{Proof Sketch}
Suppose that $\Sigma$ is not proper. By generalizing the arguments first described in \cite[Appendix B]{coldingMinicozzi2004IV} and \cite[Corollary 2.13]{coldingMinicozzi2004II} (see also \cite[Lemma~1.1]{meeks2005uniqueness}) and combining them with the recent results concerning the stable Bernstein problem in \cite{chodoshLi2024stable,chodosh2023stable,chodosh2024stable,catino2024two,mazet2024stable}, the first two authors showed in
\cite[Lemma~2.11]{aryanMcWeeney2026calabi} that after a rigid motion and,
in the slab case, a dilation, $\Sigma$ is proper in an open half-space
or slab and accumulates on the boundary hyperplane $\{x_4=0\}$. The
height function $ h=x_4|_\Sigma$ is therefore positive and harmonic, with $\inf_\Sigma h=0$. For regular values $0<t<\tau$ sufficiently close to zero, bounded curvature
gives $ |\grad_\Sigma h|^2\leq2\Lambda h,$ where $|A_\Sigma|\leq \Lambda$. Thus, the vertical projection is uniformly non-degenerate on $\{h\leq\tau\}$. Note that this is the main place where bounded curvature is assumed. In particular, having a uniform bound on the curvature for heights close to zero suffices for our argument.

Using \cite[Lemma~1.4]{meeks2005uniqueness}, we show that every connected
component $V$ of $\{h<\tau\}$ is a global graph
\begin{align*}
        V=\{(y,u(y)):y\in D\},
\end{align*}
where $u=\tau$ on $\partial D$ and $|Du|<1$.

Since $\{x_4=0\}$ is a limit leaf, there are infinitely many distinct
connected components $V_j$ of $\{h<\tau\}$. At each of the regular levels $\{h=\tau\}$ and $\{h=t\}$, the compact connected components have linearly independent classes in $H_2(\Sigma;\bZ_2)$, and hence each level has only finitely many compact connected components. It follows that one can choose a
connected component $V$ of $\{h<\tau\}$ such that $\partial V$ contains
a connected noncompact component $S\subset\{h=\tau\},$ and $V\cap\{h=t\}$ contains a connected noncompact component $ \Gamma\subset\{h=t\}.$ For the graph function $u$ of $V$, extend $g=\tau-u$ by zero outside $D$. Then, following the proof of \cite[Lemma~5.5]{aryanMcWeeney2026topological}, we can show that
\begin{align*}
        \int_{\bR^3}
        \frac{|\grad g|^2}{1+|y|}
        \,\ud y
        <\infty.
\end{align*}

However, the projected sets $\pi(S)$ and $\pi(\Gamma)$ are connected
and unbounded, and $g$ takes two distinct constant values on them.
They therefore meet every sufficiently large sphere. The Lipschitz
bound gives two fixed-size caps on each such sphere on which the values
of $g$ remain uniformly separated. Applying \cite[Lemma~5.8]{aryanMcWeeney2026topological} then gives a logarithmic lower bound, contradicting the finiteness of the tilt integral above.

\subsection{Acknowledgments}
The first two authors thank their advisors Tobias Colding and William Minicozzi for their support and encouragement. S. Aryan acknowledges support from the Simons Dissertation Fellowship and NSF Grant DMS-2405393. A. McWeeney acknowledges support from the National Science Foundation.

\section{Proof of Theorem~\ref{thm:main}}
Suppose that $\Sigma$ is not proper. By generalizing the arguments first described in \cite[Appendix B]{coldingMinicozzi2004IV} and \cite[Corollary 2.13]{coldingMinicozzi2004II} (see also \cite[Lemma~1.1]{meeks2005uniqueness}) and combining them with the recent results concerning the stable Bernstein problem in \cite{chodoshLi2024stable,chodosh2023stable,chodosh2024stable,catino2024two,mazet2024stable}, the first two authors showed in \cite[Lemma~2.11]{aryanMcWeeney2026calabi} that after a rigid motion and,
in the slab case, a dilation, $\overline\Sigma$ is a minimal
lamination and the inclusion $\Sigma\hookrightarrow U$ is proper,
where
\begin{align*}
        U=\{0<x_4\}
        \qquad
        \text{or}
        \qquad
        U=\{0<x_4<H\},
\end{align*}
and $\{x_4=0\}$ is contained in the limit set of $\Sigma$. Here the closure $\overline \Sigma$ is taken in $\mathbb{R}^4.$ Set $h=x_4|_\Sigma$.  Let
\begin{align*}
        \pi:\bR^4\rightarrow\bR^3,
        \qquad
        \pi(x_1,x_2,x_3,x_4)=(x_1,x_2,x_3).
\end{align*}
Then
\begin{align}
        h>0,
        \qquad
        \inf_\Sigma h=0,
        \qquad
        \Delta_\Sigma h=0.
        \label{eq:height-basic-properties}
\end{align}
Let $\Lambda_*= \max\{1,\sup_\Sigma|A_\Sigma|\}.$ Since $\inf_\Sigma h=0<\sup_\Sigma h,$ Sard's theorem gives regular values $t$ and $\tau$ satisfying
\begin{align}
        0<t<\tau<
        \min\left\{\sup_\Sigma h,\frac{1}{4\Lambda_*}\right\}.
\end{align}
\begin{lem}\label{lem:compact-level-count}
Let $h:M^3\rightarrow\bR$ be a nonconstant harmonic function, where
$M$ is a connected smooth manifold without boundary, and suppose that $B=\dim_{\bZ_2}H_2(M;\bZ_2)<\infty.$ If $c$ is a regular value of $h$, then $\{h=c\}$ has at most $B$ compact connected components.
\end{lem}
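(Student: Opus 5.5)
The plan is to show that the fundamental classes $[S_1],\dots,[S_k]\in H_2(M;\bZ_2)$ of distinct compact connected components $S_1,\dots,S_k$ of $\{h=c\}$ are linearly independent over $\bZ_2$. This forces $k\le B$. Since $c$ is a regular value, each $S_i$ is a closed embedded smooth surface. It is two-sided, because $\grad h$ is a nonvanishing normal field along it. Hence each $S_i$ carries a $\bZ_2$ fundamental class.

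Suppose for contradiction that some nonempty subfamily has zero sum, say $\sum_{i\in I}[S_i]=0$. Set $S=\bigcup_{i\in I}S_i$, a closed two-sided hypersurface with $[S]=0$ in $H_2(M;\bZ_2)$. I will use the standard fact that a closed two-sided (equivalently, $\bZ_2$-Poincaré dual to a class) hypersurface is null-homologous mod $2$ if and only if it separates in the following sense. Each component of $M\setminus S$ can be assigned a label in $\bZ_2$ so that crossing any $S_i$, $i\in I$, changes the label.

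To justify this, take the $\bZ_2$ intersection pairing of $[S]$ with closed loops. Concretely, $[S]$ is Poincaré dual to the class in $H^1_c$ or $H^1$ given by the mod-$2$ intersection number with loops; some care is needed since $M$ is noncompact, so I would use $H_2(M)\cong H^1_c(M)$ via Poincaré duality with $\bZ_2$ coefficients. If $[S]=0$, every closed loop meets $S$ an even number of times transversally. Then the label defined by the parity of crossings along a path from a basepoint is well defined, and it flips across each $S_i$. In particular, the union $\Omega$ of the components labeled $1$ is an open set whose topological boundary is exactly $S$, since each point of $S$ has label $0$ on one side and $1$ on the other.

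Next I produce a contradiction using harmonicity. Near each $S_i$ the two sides of $S_i$ lie in $\{h>c\}$ and $\{h<c\}$ respectively, because $\grad h\ne0$. Either $\Omega$ or its complement's interior $M\setminus\overline\Omega$ is a region whose boundary is $S$. I need a region with compact closure. Here I use that $S$ is compact: consider a tubular neighbourhood. Among $\Omega$ and $M\setminus\overline{\Omega}$ it is not automatic that one is bounded, so I argue differently. Let $W$ be either region, and consider the function $\phi=(h-c)$ restricted to $W$ near $S$. I will instead apply the flux identity. On each $S_i$, $\int_{S_i}\partial_\nu h\ne0$ with a definite sign relative to the orientation pointing toward $\{h>c\}$.

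The cleanest route is therefore the following. Define a compactly supported closed $1$-form, or rather use that $\star dh$ is a closed $2$-form, since $h$ is harmonic. Pair it with $[S]$: $\langle[\star dh],\sum_{i\in I}[S_i]\rangle$ must vanish. This pairing only makes sense with real coefficients and orientations, so I first lift. Orient each $S_i$ by $\grad h$. The separation/label argument above shows that $\Omega$ has boundary $S$, so orienting $S$ as $\partial\Omega$ gives a real cycle bounding a (possibly noncompact) region. Stokes then yields $\int_S\star dh=\int_\Omega d\star dh=0$, provided $\Omega$ or its complement has compact closure. Otherwise I cut $\Omega$ off by a compact exhaustion, which is the delicate point. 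On the other hand, $\int_{S_i}\star dh=\pm\int_{S_i}|\grad h|\ne0$.

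The main obstacle is that the boundary orientation from $\Omega$ may give mixed signs across different $S_i$, and $\Omega$ may be noncompact. To handle this I would instead take a single component $S_j$ and use that $[S_j]$ itself is nonzero with real coefficients: the flux $\int_{S_j}\star dh\neq0$ shows $S_j$ is nonseparating. I would then choose loops $\gamma_j$ crossing $S_j$ once, built from the gradient flow in a neighbourhood of $S_j$ and closed up through the connected complement. The loops are chosen to avoid the other components where possible, giving a dual pairing matrix. I expect this duality construction, ensuring the matrix is invertible mod $2$, to be the hardest step.
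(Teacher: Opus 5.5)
Your framework (show that the $\bZ_2$ classes of the compact components are linearly independent) is the same as the paper's. There is, however, a genuine gap at the step you flag yourself: you never obtain a \emph{compact} region bounded by the subfamily. The characterization you invoke, that $[S]=0$ if and only if $M\setminus S$ admits a $\bZ_2$-labelling flipping across $S$, is false for noncompact $M$; only the forward implication holds. Deriving the labelling from loop parities also discards exactly the information you need. For example, on $M=S^2\times\bR$ with the product metric, $h(x,s)=s$ is harmonic and $S^2\times\{c\}$ separates $M$ into two noncompact halves. Every loop meets it an even number of times (indeed $H_1(M)=0$), yet $[S^2\times\{c\}]\neq0$ in $H_2(M;\bZ_2)\cong\bZ_2$. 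What vanishing in ordinary, compactly supported homology actually gives is a compact region. Write the fundamental cycle of $S$ as $\partial\sigma$ for a compact singular $3$-chain $\sigma$, and label $x\in M\setminus S$ by the image of $\sigma$ in $H_3(M,M\setminus\{x\};\bZ_2)\cong\bZ_2$. This label is locally constant, flips across each $S_i$, and vanishes outside $\supp\sigma$. So the set labelled $1$ has compact closure $\Omega$ with $\partial\Omega=S$. This is the $H^1_c$ duality you mention but then do not use, and it is the content of the paper's sentence that there is a compact smooth $\Omega$ with $\partial\Omega=C_1\sqcup\cdots\sqcup C_k$.

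Once $\Omega$ is compact, the flux identity is the wrong tool, for the reason you note: the terms $\pm\int_{S_i}|\grad h|$ can cancel. The paper instead uses that $h=c$ on all of $\partial\Omega$. The maximum principle, or equivalently $\int_\Omega|\grad h|^2=\int_{\partial\Omega}(h-c)\,\partial_\nu h=0$, then forces $h\equiv c$ on $\Omega$. This contradicts $\grad h\neq0$ on $S$, or, via unique continuation, the nonconstancy of $h$.

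Your fallback does not repair the gap. Nonzero flux does not make $S_j$ nonseparating: in the example above the flux equals the area of $S^2$, yet $S_j$ separates. Closed loops also cannot detect classes in $H_2$ of a noncompact $3$-manifold: there $[S_j]\neq0$, but no loop meets $S_j$ exactly once. The correct dual objects would be proper, locally finite $1$-cycles, and you give no construction of those. So the invertible pairing-matrix step cannot be carried out as described.
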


\begin{proof}
Suppose that, for some $k\geq1$, the level set $\{h=c\}$ has distinct compact connected components
$C_1,\ldots,C_k$ satisfying
\begin{align}
        [C_1]+\cdots+[C_k]=0
        \qquad
        \text{in }H_2(M;\bZ_2).
\end{align}
Then there is a compact smooth submanifold
$\Omega\subset M$ such that
\begin{align}
        \partial\Omega=C_1\sqcup\cdots\sqcup C_k.
\end{align}
After discarding any component of $\Omega$ with empty boundary, the
maximum principle and $h=c$ on $\partial\Omega$ give $h=c$ on
$\Omega$. Unique continuation then implies that $h$ is constant on
$M$, which is a contradiction. Thus the homology classes of the compact connected components of $\{h=c\}$ are linearly independent. Since $\dim_{\mathbb Z_2}H_2(M;\mathbb Z_2)=B$, there are at most $B$ such components.
\end{proof}
\begin{lem}\label{lem:low-component-global-graph}
Let $V$ be a connected component of $\{h<\tau\}$, and set
\begin{align}
        M=\overline V^{\,\Sigma},
        \qquad
        D=\pi(V).
\end{align}
Here the closure in the above display is taken with respect to $\Sigma.$ Then $\pi:M\rightarrow\overline D$ is a homeomorphism. In particular, there is $u\in C^2(D)\cap C^0(\overline D)$ such that
\begin{align}
        V=\{(y,u(y)):y\in D\},
        \qquad
        u=\tau
        \quad\text{on }\partial D,
\end{align}
and
\begin{align}
        |Du|<1.
        \label{eq:low-component-slope}
\end{align}
\end{lem}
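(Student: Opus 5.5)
The plan is in three steps. First, show the gradient estimate $|\grad_\Sigma h|^2\le 2\Lambda_* h$ on $\{h\le\tau\}$. Second, convert it into a slope bound for the unit normal. Third, use the properness of $\Sigma$ in $U$ and the lamination structure near $\{x_4=0\}$ to upgrade the resulting local graph to a global graph, via \cite[Lemma~1.4]{meeks2005uniqueness}.

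\emph{Step 1 (gradient bound).} Write $\nu$ for the unit normal and $\nu_4=\langle \nu,e_4\rangle$, so that $|\grad_\Sigma h|^2=1-\nu_4^2$. Fix $p\in\Sigma$ with $h(p)=s\le\tau$. The second fundamental form is bounded by $\Lambda_*$, so $\Sigma$ is a graph near $p$ over its tangent plane, on a ball of radius $c/\Lambda_*$, with Hessian bounded by $\Lambda_*$. Suppose $|\grad_\Sigma h|(p)^2>2\Lambda_* s$. Move from $p$ along the geodesic in the direction $-\grad_\Sigma h(p)$. Taylor expansion gives
\begin{align}
h(\gamma(r))\le s-r|\grad h|+\tfrac12\Lambda_* r^2 .
\end{align}
Taking $r=|\grad h|/\Lambda_*$, which is admissible because $|\grad h|\le1$, gives $h\le s-|\grad h|^2/(2\Lambda_*)<0$. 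This contradicts $h>0$. The hypothesis $\tau<1/(4\Lambda_*)$ then yields $1-\nu_4^2\le 2\Lambda_*\tau<1/2$ on $\{h\le\tau\}$. Hence $\nu_4^2>1/2$, and this is exactly the statement that the tangent planes have slope $<1$ over $\{x_4=0\}$.

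\emph{Step 2 (local graph).} On $M$ the restriction $\pi|_M$ is then a local diffeomorphism, and each local inverse has $|Du|<1$. Since $M$ is connected, $\nu_4$ never vanishes and keeps a fixed sign, so $M$ is locally a graph with a consistent orientation.

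\emph{Step 3 (global graph).} To show $\pi:M\to\overline D$ is a homeomorphism, I will argue that $\pi|_M$ is a covering map onto its image, and that the image is simply covered. For the path-lifting property I will use completeness of $\Sigma$ together with the slope bound: a lifted path has length at most $\sqrt2$ times the length of its projection. For properness I will use that the inclusion $\Sigma\hookrightarrow U$ is proper. Consequently a lifted path with bounded length, staying at heights in $[0,\tau]$, can only escape by approaching $\{x_4=0\}$, and that would contradict $h>0$ on a complete surface within finite length unless it converges in $\Sigma$. Injectivity is the main obstacle. The natural tool is \cite[Lemma~1.4]{meeks2005uniqueness}: a connected, properly embedded surface in a half-space region with uniformly bounded slope, and with boundary on a level $\{x_4=\tau\}$, is a graph over its projection.

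I would first try to prove injectivity directly. Suppose $y\in D$ has two preimages $(y,a)$ and $(y,b)$ in $M$ with $a<b$. Connect them by a path in $M$ and project it to a loop in $D$. The sheets of $\Sigma$ at heights below $\tau$ are disjoint graphs with gradient $<1$, by embeddedness and the slope bound. The vertical segment joining the two points, together with the fact that the sheets are stacked, should give a contradiction via the maximum principle applied to $h$ restricted to the region between the two sheets.

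Finally, on $\partial D$ the graph must reach height $\tau$: boundary points of $D$ are limits of points of $V$ whose lifts converge in $\Sigma$ by Step 3, and those limits lie in $\partial V\subset\{h=\tau\}$. This gives $u=\tau$ on $\partial D$. The regularity $u\in C^2(D)\cap C^0(\overline D)$ follows from the inverse function theorem and the homeomorphism property.
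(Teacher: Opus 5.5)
Your Steps 1 and 2 agree with the paper. One small correction: no graphical chart is needed. The argument only uses $|\grad_\Sigma^2 h|=|\langle\nu,e_4\rangle|\,|A_\Sigma|\le\Lambda_*$ along the complete geodesic, so there is no admissibility constraint on $r$.

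\textbf{The gap is injectivity in Step 3, which is the heart of the lemma.} \cite[Lemma~1.4]{meeks2005uniqueness} requires three things: $\pi$ proper on the surface, $\pi$ a submersion, and $\pi$ injective on each boundary component. For $M$ you have no a priori properness. $\Sigma$ is proper only in the open region $U$ and accumulates on $\{x_4=0\}$. So nothing you know rules out a sequence in $M\cap\pi^{-1}(K)$, with $K$ compact, along which $h\to0$. Properness of $\pi|_M$ is a consequence of the lemma, not an input.

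The missing idea is to truncate:
\begin{itemize}
\item Given $p_1,p_2\in M$ with $\pi(p_1)=\pi(p_2)$, join them by a path $\gamma\subset M$.
\item Choose a regular value $0<\varepsilon<\min_\gamma h$, and let $M_\varepsilon$ be the component of $M\cap\{h\ge\varepsilon\}$ containing $\gamma$.
\item Since $M_\varepsilon\subset\{\varepsilon\le h\le\tau\}$ and $\Sigma\hookrightarrow U$ is proper, $\pi|_{M_\varepsilon}$ is proper.
\item $\pi|_{M_\varepsilon}$ is a submersion by your slope bound.
\item Each boundary component of $M_\varepsilon$ lies in a horizontal level $\{h=\varepsilon\}$ or $\{h=\tau\}$, on which $\pi$ is trivially injective.
\end{itemize}
Lemma~1.4 then gives injectivity on $M_\varepsilon$, so $p_1=p_2$.

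\textbf{Your fallback arguments do not substitute for this.}
\begin{itemize}
\item Path lifting, which needs only completeness and the factor $\sqrt2$, does not make $\pi|_V$ a covering of $D$. Before injectivity is known, the lift of a path in $D$ may run into $\partial M$ at a point whose projection lies in $D$.
\item Even a genuine covering of $D$ need not be one-sheeted, since $D$ need not be simply connected.
\item The ``stacked sheets plus maximum principle'' idea assumes the sheet structure that is to be proved. It also names no function or domain to which the maximum principle would apply.
\end{itemize}

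Your boundary paragraph also needs more than ``by Step 3''. The paper lifts the segment from $y_i$ to its first exit point $z_i\in\partial D$. This gives $q_i\in\partial M$ within distance $\sqrt2\,|y_i-z_i|$ of the lift of $y_i$. It then uses that $\pi|_{\partial M}$ is injective and proper, again by properness in $U$ since $\partial M\subset\{h=\tau\}$. From this it obtains that $\pi(\partial M)$ is closed, that $\partial D=\pi(\partial M)$, and that the lifts converge.
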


\begin{proof}
For a local choice of unit normal $\nu$,
\begin{align}
        \grad_\Sigma^2h
        =
        \langle\nu,e_4\rangle A_\Sigma
\end{align}
up to the sign convention for $A_\Sigma$. Hence
\begin{align}
        |\grad_\Sigma^2h|\leq\Lambda_*.
\end{align}
Fix $p\in \Sigma$ and set $a=|\grad_\Sigma h(p)|$. If $a>0$, let $\gamma$
be the complete unit-speed geodesic satisfying
\begin{align}
        \gamma(0)=p,
        \qquad
        \gamma'(0)=-\frac{\grad_\Sigma h(p)}{a}.
\end{align}
Since $h>0$,
\begin{align}
        0
        <
        h\left(\gamma\left(\frac{a}{\Lambda_*}\right)\right)
        \leq
        h(p)-\frac{a^2}{2\Lambda_*}.
\end{align}
Therefore
\begin{align}
        |\grad_\Sigma h|^2
        \leq
        2\Lambda_*h.
        \label{eq:height-gradient-estimate}
\end{align}

For $p\in M$ and $v\in T_p\Sigma$, since
\[
|\nabla_\Sigma h(p)|^2
\le
2\Lambda_*h(p)
\le
2\Lambda_*\tau
<
\frac12,
\]
it follows that
\begin{align}
        |d\pi_p(v)|^2
        &=
        |v|^2-dh(v)^2
        \\
        &\geq
        \left(1-|\nabla_\Sigma h(p)|^2\right)|v|^2
        \geq
        \frac12|v|^2.
        \label{eq:vertical-projection-lower}
\end{align}
Hence \(M\) is locally graphical over \(\{x_4=0\}\), up to its boundary, and the local graphing functions have gradient norm less than one.

It remains to show that the local graph structure extends to a global graph. Equivalently, it remains to prove that $\pi:M\to\overline D$ is a homeomorphism.
Note that since $\tau$ is a regular value, $M$ is a smooth closed domain in
$\Sigma$ with
\begin{align}
        \partial M=M\cap\{h=\tau\}.
\end{align}
Moreover, since $\Sigma$ is complete and $M$ is closed in $\Sigma$, the intrinsic length metric on $M$ is complete.

The restriction $\pi|_{\partial M}$ is injective, since two points of
$\partial M$ with the same projection have the same ambient
coordinates. It is also proper: if $K\subset\{x_4=0\}\cong\mathbb R^3$ is compact, then
\begin{align}
        (\pi|_{\partial M})^{-1}(K)
        =
        \partial M\cap\left(K\times\{\tau\}\right),
\end{align}
which is compact because $\Sigma\hookrightarrow U$ is proper. We claim that $\pi|_M$ is injective. Let $p_1,p_2\in M$ satisfy
\begin{align}
        \pi(p_1)=\pi(p_2),
\end{align}
and choose a path $\gamma\subset M$ joining them. Choose a regular
value
\begin{align}
        0<\varepsilon<\min_\gamma h,
\end{align}
and let $M_\varepsilon$ be the connected component of
$M\cap\{h\geq\varepsilon\}$ containing $\gamma$. Indeed, $M_\varepsilon$ is a connected component of the closed set
$M\cap\{h\geq\varepsilon\}$ and is therefore closed in $\Sigma$.
Since $M_\varepsilon\subset\{\varepsilon\leq h\leq\tau\}$ and
$\Sigma\hookrightarrow U$ is proper, it follows that
$\pi|_{M_\varepsilon}$ is proper. Moreover, every connected component
of $\partial M_\varepsilon$ lies in either $\{h=\varepsilon\}$ or
$\{h=\tau\}$, and $\pi$ is injective on each such component. By
\eqref{eq:vertical-projection-lower}, $\pi$ is a submersion. Hence,
\cite[Lemma~1.4]{meeks2005uniqueness} implies that
$\pi|_{M_\varepsilon}$ is injective. Therefore $p_1=p_2$. It follows
that $\pi|_V$ is a diffeomorphism onto $D$.

We now show that $Z:=\pi(\partial M)=\partial D$. First, every point of $\partial M$ is a limit of points of $V$, while the injectivity of $\pi|_M$ gives $Z\cap D=\emptyset$. Therefore, $Z\subset\partial D$. Conversely, let $y_i\in D$ converge to $y\in\partial D$, and let $z_i$ be the first exit point from $D$ on the segment from $y_i$ to $y$. Thus,
\begin{align}
        [y_i,z_i)\subset D,
        \qquad
        z_i\in\partial D,
        \qquad
        z_i\rightarrow y.
\end{align}
The lift of $[y_i,z_i)$ to $V$ has length at most
\begin{align}
        \sqrt{2}\,|y_i-z_i|
\end{align}
by \eqref{eq:vertical-projection-lower}. Completeness therefore gives
a point $q_i\in M$ satisfying
\begin{align}
        \pi(q_i)=z_i.
\end{align}
Since $z_i\notin D$, we have $q_i\in\partial M$, and hence $z_i\in Z$. Since $\pi|_{\partial M}$ is proper, its image $Z$ is closed in $\bR^3$. Therefore, $y\in Z$. Thus
\begin{align}
        \partial D=Z,
        \qquad
        \pi(M)=\overline D.
\end{align}
We next prove continuity of the inverse along $\partial D$. Set
\begin{align}
        p_i=(\pi|_V)^{-1}(y_i),
        \qquad
        q=(\pi|_{\partial M})^{-1}(y).
\end{align}
Since $\pi|_{\partial M}$ is a proper embedding and $z_i\rightarrow y$,
we have $q_i\rightarrow q.$ Moreover, the lifted segment gives
\begin{align}
        d_M(p_i,q_i)
        \leq
        \sqrt{2}\,|y_i-z_i|
        \rightarrow0.
\end{align}
Consequently, $p_i\rightarrow q$. Since the inverse of $\pi$ is smooth
over $D$, this proves that
\begin{align}
        \pi:M\rightarrow\overline D
\end{align}
is a homeomorphism. Define
\begin{align}
        u=h\circ(\pi|_V)^{-1}.
\end{align}
Then $u\in C^2(D)\cap C^0(\overline D)$, $u=\tau$ on
$\partial D$, and
\begin{align}
        \frac{|Du|^2}{1+|Du|^2}
        =
        |\grad_\Sigma h|^2
        \leq
        2\Lambda_*\tau
        <
        \frac12,
\end{align}
which proves \eqref{eq:low-component-slope}.
\end{proof}

\begin{lem}\label{lem:select-noncompact-levels}
There is a connected component $V$ of $\{h<\tau\}$ such that
$\partial V$ has a connected noncompact component $S\subset\{h=\tau\},$
and $V\cap\{h=t\}$ has a connected noncompact component $\Gamma\subset\{h=t\}.$
\end{lem}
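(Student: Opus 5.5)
The plan is to produce infinitely many distinct components of $\{h<\tau\}$ that meet $\{h=t\}$. Lemma~\ref{lem:compact-level-count} says only finitely many of them can contain a compact level component. A pigeonhole argument then leaves one component $V$ with the required noncompact pieces $S$ and $\Gamma$.

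\emph{Step 1: infinitely many components.} Since $\{x_4=0\}$ lies in the limit set of $\Sigma$, there are points of $\Sigma$ with arbitrarily small height near a fixed point $x_0\in\{x_4=0\}$. By Lemma~\ref{lem:low-component-global-graph}, each component $V$ of $\{h<\tau\}$ is a graph over $D=\pi(V)$ with $|Du|<1$ and $u=\tau$ on $\partial D$. Hence a component containing a point of height $s$ near $x_0$ contains a graphical piece over a ball of radius comparable to $\tau-s$ around $\pi$ of that point. By embeddedness, distinct components are disjoint graphs.

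Suppose only finitely many components $V_1,\dots,V_N$ existed. Each $V_i$ is a single graph, so $u_i$ is bounded below on the compact set $\overline{B}\cap\overline{D_i}$ near $\pi(x_0)$ by a positive number. Indeed, $u_i>0$ on $\overline D_i$: $u_i=\tau$ on $\partial D_i$ and $u_i=h>0$ inside. This contradicts heights tending to $0$ near $x_0$. So there are infinitely many components $V_j$.

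The same argument shows $\inf_{V_j}h=0$ for infinitely many $j$; at minimum, infinitely many $V_j$ reach heights below $t$. Each of these meets $\{h=t\}$ in a nonempty union of level components, by connectedness and the fact that $\sup_{V_j} h\to\tau$ at $\partial V_j$.

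\emph{Step 2: discarding components with compact pieces.} By Lemma~\ref{lem:compact-level-count} applied to $h$ on $\Sigma$ (nonconstant harmonic, $b_2<\infty$), the regular levels $\{h=\tau\}$ and $\{h=t\}$ together have at most $2B$ compact connected components. Each such compact component lies in $\partial V_j$ or in $V_j$ for exactly one $j$, since the $\overline{V_j}$ are disjoint graphs by Lemma~\ref{lem:low-component-global-graph}. Hence all but finitely many of the infinitely many $V_j$ from Step 1 have the following property: every component of $\partial V_j$ is noncompact, and every component of $V_j\cap\{h=t\}$ is noncompact.

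It then suffices to pick one such $V$ that also has $\partial V\neq\emptyset$ and $V\cap\{h=t\}\neq\emptyset$. The boundary is nonempty because $V\neq\Sigma$ ($\Sigma$ is connected and $\sup h>\tau$), so $\partial V\subset\{h=\tau\}$ is nonempty. Take $S$ to be a component of $\partial V$ and $\Gamma$ a component of $V\cap\{h=t\}$.

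\emph{Main obstacle.} I expect the difficulty to be Step 1: showing rigorously that infinitely many distinct components of $\{h<\tau\}$ dip below height $t$. My first attempt is the lamination/limit-leaf argument above: near a point of the limit leaf $\{x_4=0\}$, the sheets of $\Sigma$ converge to the plane. Each sheet at height below $t$ belongs to some $V_j$. If only finitely many $V_j$ existed, a single graph $u_j$ would have to contain points of heights tending to zero over a bounded region. Each $u_j$ extends continuously to $\overline D_j$ with positive values, so it has a positive minimum over that compact region, which is the contradiction. I should double-check that the sheets near $x_0$ really project into a fixed bounded set; this follows from taking points of $\Sigma$ converging to $x_0$ itself.
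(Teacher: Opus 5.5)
Your argument is correct and is essentially the paper's: you produce infinitely many distinct components of $\{h<\tau\}$ reaching below height $t$ near a point of the limit leaf, then use Lemma~\ref{lem:compact-level-count} to discard the finitely many components whose closure contains a compact component of $\{h=\tau\}$ or $\{h=t\}$. The differences are cosmetic: you get distinctness from the positive minimum of each continuous $u_i$ on a compact set rather than from each $V_j$ meeting the vertical fiber over $y_0$ at most once, and you discard all bad components at once rather than building $\Gamma_j$ along a path. Two small remarks: the fact that a component of $\{h=\tau\}$ lies in the closure of at most one $V_j$ follows from $\tau$ being a regular value (a connected one-sided collar), not from the graph lemma, and your side claim that $\inf_{V_j}h=0$ for infinitely many $j$ is not justified by your argument; neither point affects the proof.
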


\begin{proof}
Fix $y_0\in\{x_4=0\} \cong  \bR^3$. Since $\{x_4=0\}$ is a limit leaf, a lamination
chart centered at $(y_0,0)$ gives distinct points $p_j\in\Sigma$ such
that
\begin{align}
        \pi(p_j)=y_0,
        \qquad
        h(p_j)\rightarrow0.
\end{align}
After discarding finitely many indices, $h(p_j)<t$. Let $V_j$ be the
connected component of $\{h<\tau\}$ containing $p_j$. By
Lemma~\ref{lem:low-component-global-graph}, each $V_j$ meets the
vertical fiber over $y_0$ in at most one point. Thus the $V_j$ are
pairwise distinct.

Set $b_2=b_2(\Sigma;\mathbb{Z}_2)=\dim_{\bZ_2}H_2(\Sigma;\bZ_2).$ By Lemma~\ref{lem:compact-level-count}, the level $\{h=\tau\}$ has at
most $b_2$ compact connected components. Each $\partial V_j$ is
nonempty. Indeed, if $\partial V_j=\emptyset$, then $V_j$ is both open and
closed in $\Sigma$, and hence $V_j=\Sigma$, contradicting
$\{h>\tau\}\neq\emptyset$. Moreover, a connected component of $\{h=\tau\}$ is contained in the boundary of at most one component of $\{h<\tau\}$. Consequently, after discarding finitely many indices, each
$\partial V_j$ has a noncompact connected component $S_j\subset\{h=\tau\}.$

Choose $b_j\in S_j$ and a path in $\overline V_j^{\,\Sigma}$ joining $p_j$ to
$b_j$. Since $h(p_j)<t<\tau=h(b_j),$ the path meets $\{h=t\}$. Let $\Gamma_j$ be the connected component of $\{h=t\}$ containing such an intersection point. Since $t<\tau$,
each $\Gamma_j$ is contained in $V_j$, and hence the $\Gamma_j$ are
pairwise distinct. Lemma~\ref{lem:compact-level-count} implies that
at most $b_2$ of them are compact. Choosing an index outside the resulting finite set of exceptional indices proves the lemma.
\end{proof}

Fix $V$, $S$, and $\Gamma$ as in Lemma~\ref{lem:select-noncompact-levels}, and write $V=\{(y,u(y)):y\in D\}$ as in Lemma~\ref{lem:low-component-global-graph}.

\begin{lem}\label{lem:weighted-base-energy}
Identify $\{x_4=0\}$ with $\bR^3$ and define $g:\bR^3\to \mathbb R$,
\begin{align}
        g(y)=
        \begin{cases}
        \tau-u(y),&y\in D,\\
        0,&y\notin D.
        \end{cases}
\end{align}
Then $g$ is globally $1$-Lipschitz and
\begin{align}
        \int_{\bR^3}
        \frac{|\grad g(y)|^2}{1+|y|}
        \,\ud y
        <\infty.
        \label{eq:finite-weighted-base-energy}
\end{align}
\end{lem}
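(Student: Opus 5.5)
The plan has two parts: the Lipschitz bound comes from the graph structure in Lemma~\ref{lem:low-component-global-graph}, and the weighted energy bound from the divergence form of the minimal surface equation tested against a weighted cutoff. I have not closed the weighted estimate; the last paragraph says where it stands.

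\emph{Lipschitz bound.} By Lemma~\ref{lem:low-component-global-graph}, $u\in C^2(D)\cap C^0(\overline D)$, $u=\tau$ on $\partial D$ and $|Du|<1$. Hence $g$ is continuous on $\bR^3$, vanishes on $\partial D$, and $g=\tau-u>0$ on $D$ because $h<\tau$ on $V$. Fix $x,y\in\bR^3$ and the segment $[x,y]$. The set $[x,y]\cap D$ is relatively open, so it is a disjoint union of open subintervals. On each subinterval $g$ is $C^1$ with derivative of size less than one, and at any endpoint lying in $\partial D$ we have $g=0$.
\begin{itemize}
\item If $x$ and $y$ lie in the closure of the same subinterval, the mean value theorem gives $|g(x)-g(y)|\le|x-y|$.
\item Otherwise, let $x'$ be the first point of $[x,y]$ in $\partial D\cup D^c$ after $x$, and $y'$ the last such point before $y$. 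Then $|g(x)|\le|x-x'|$ and $|g(y)|\le|y-y'|$, so $|g(x)-g(y)|\le|x-x'|+|y'-y|\le|x-y|$.
\end{itemize}
In particular $|\grad g|\le1$ a.e., $\grad g=0$ a.e.\ off $D$, and $0\le g\le\tau$.

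\emph{Weighted energy: setup.} On $D$, $g$ solves the minimal surface equation in divergence form, $\operatorname{div}(Dg/W)=0$ with $W=\sqrt{1+|Dg|^2}\in[1,\sqrt2)$. Equivalently, $h$ is harmonic on $V$ and $\tau-h$ vanishes on $\partial V$. Take the test function $g\,\varphi$, where $\varphi=\eta_R^2\,\psi$, $\eta_R$ is a standard cutoff to $B_R$, and $\psi$ is a positive radial weight to be chosen. Since $g=0$ on $\partial D$, there is no boundary term, and Cauchy--Schwarz gives
\begin{align}
\int_D\frac{|Dg|^2}{W}\varphi\le 4\int_D g^2\frac{|D\varphi|^2}{\varphi}.
\end{align}
With $\psi=(1+|y|)^{-1}$ and $g\le\tau$ the right side is about $\tau^2\int_{D\cap B_R}(1+|y|)^{-3}$. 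This is only logarithmically divergent, so the crude bound $g\le\tau$ by itself falls short.

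\emph{Main obstacle: removing the logarithm.} I would recover it from extra structure, following \cite[Lemma~5.5]{aryanMcWeeney2026topological}. One input is the gradient estimate \eqref{eq:height-gradient-estimate}, $|Dg|^2\lesssim\Lambda_*u$, which makes the energy small where $u$ is small. The other is the flux/ODE argument:
\begin{itemize}
\item Set $E(R)=\int_{D\cap B_R}|Dg|^2/W$.
\item Green's identity gives $E(R)=\int_{D\cap\partial B_R}g\,\partial_rg/W\le C\tau R\,E'(R)^{1/2}$.
\item Then $\int^\infty E'(r)r^{-1}\,dr=\lim E(R)/R+\int^\infty E(r)r^{-2}\,dr$, so I need sub-quadratic growth $E(R)=O(R^{1-\delta})$, or at least $\int^\infty E(r)r^{-2}\,dr<\infty$.
\end{itemize}
My first attempt would combine the differential inequality $E\lesssim R\sqrt{E'}$ with the a priori bound $E\lesssim R^3$, together with the fact that $D\neq\bR^3$. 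The latter holds since $\partial D=\pi(\partial V)\supset\pi(S)\neq\emptyset$, so $g$ vanishes on an unbounded set. The aim is to derive the needed decay and conclude \eqref{eq:finite-weighted-base-energy} by letting $R\to\infty$. If this does not close, I would instead choose $\psi$ adapted to $g$ itself, for instance $\psi=(1+|y|)^{-1}$ multiplied by a power of $g$, so that the error term carries an extra factor of $g$. This closing step is where I expect the real difficulty.
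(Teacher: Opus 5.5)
Your Lipschitz argument is correct and is essentially the paper's. The gap is the weighted energy bound. You leave it open, and none of the closings you propose works as stated.

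Applying Cauchy--Schwarz discards the one piece of information that removes the logarithm, namely a sign, and the flux/ODE route cannot restore it. From $E\le C\tau R\sqrt{E'}$ you get $(1/E)'\le-(C\tau R)^{-2}$. Integrating from $R$ to $\infty$ yields only $E(R)\le C^2\tau^2R$. That is the Caccioppoli borderline again, and it leaves $\int^\infty E(r)r^{-2}\,\ud r$ logarithmically divergent. The a priori bound $E\lesssim R^3$ and the fact $D\neq\bR^3$ add nothing here.

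Inserting a power of $g$ into the weight changes the quantity being estimated. It loses control exactly near $\partial D$, where $g$ is small and \eqref{eq:height-gradient-estimate} gives no smallness of $|Dg|$. Nor can you tolerate a $\log R$ loss: the lower bound in the proof of Theorem~\ref{thm:main} grows only like $\log\log R$, so \eqref{eq:finite-weighted-base-energy} is needed exactly.

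The missing idea is to integrate by parts a second time instead of using Cauchy--Schwarz. Work intrinsically on $\Sigma$, with a weight that is superharmonic because $\Sigma$ is minimal.

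Your identity before Cauchy--Schwarz is precisely $\int_V\varphi|\grad_\Sigma h|^2\,\ud\mu_\Sigma=-\int_V\langle\grad_\Sigma\Phi,\grad_\Sigma\varphi\rangle\,\ud\mu_\Sigma$ with $\Phi=\frac12(\tau-h)^2=\frac12 g^2\circ\pi$. Since $h$ is harmonic, $\Delta_\Sigma\Phi=|\grad_\Sigma h|^2$. Both $\Phi$ and $\grad_\Sigma\Phi$ vanish on $\partial V$, so Green's second identity gives $\int_V q_R|\grad_\Sigma h|^2\,\ud\mu_\Sigma=\int_V\Phi\,\Delta_\Sigma q_R\,\ud\mu_\Sigma$ with no boundary terms.

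Take $q_R=\eta_R\psi$, where $\psi=(1+|X|^2)^{-1/2}$ is a function of the ambient position. Because $\Delta_\Sigma|X|^2=6$, one gets $\Delta_\Sigma\psi=-3(1+|X^\perp|^2)(1+|X|^2)^{-5/2}\le0$. As $\Phi\ge0$, the interior term $\int\Phi\,\eta_R\Delta_\Sigma\psi$ is nonpositive and can be dropped. Only the annulus $B_{2R}\setminus B_R$ contributes. There $\Delta_\Sigma q_R\le CR^{-3}$, while $\Phi\le\tau^2/2$ and $\mu_\Sigma(V\cap B_{2R})\le CR^3$ (graph with $|Du|<1$).

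This bounds $\int_V\eta_R\psi|\grad_\Sigma h|^2\,\ud\mu_\Sigma$ uniformly in $R$. Let $R\to\infty$ and use $|\grad_\Sigma h|^2\,\ud\mu_\Sigma=|Du|^2W^{-1}\,\ud y$ together with $\psi\ge c(1+|y|)^{-1}$ (valid since $0<u<\tau$). This gives \eqref{eq:finite-weighted-base-energy}.

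The sign of $\Delta_\Sigma\psi$ is what removes the logarithm. Taking the weight in the ambient variable $X$, rather than in $y$, is what makes that sign automatic.
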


\begin{proof}
Let $X:\Sigma\rightarrow\bR^4$ denote the position vector. Set
\begin{align}
        E=|\grad_\Sigma h|^2,
        \qquad
        \Phi=\frac12(\tau-h)^2,
        \qquad
        \psi(X)=\frac{1}{\sqrt{1+|X|^2}}.
\end{align}
Then
\begin{align}
        \Delta_\Sigma\Phi=E,
        \qquad
        \Phi=|\grad_\Sigma\Phi|=0
        \quad\text{on }\partial V.
        \label{eq:quadratic-height-test}
\end{align}
Since $\Sigma$ is minimal and three-dimensional,
\begin{align}
        \Delta_\Sigma\psi
        =
        -3\frac{(1+|X^\perp|^2)}{(1+|X|^2)^{5/2}}
        \leq0.
        \label{eq:psi-superharmonic}
\end{align}

Let $\eta_R\in C^\infty_c(B^{\mathbb{R}^4}_{2R}(0))$ be a standard ambient radial cutoff and set $q_R=\eta_R\psi.$ Its support in $\overline V$ is compact because $\supp_{\overline V}q_R \subset \pi^{-1}(\overline D\cap\overline B_{2R}^{\bR^3}(0))$ and $\pi:\overline V\rightarrow\overline D$ is a homeomorphism. Then integration by parts and \eqref{eq:quadratic-height-test} give
\begin{align}
        \int_V q_RE\,\ud\mu_\Sigma
        =
        \int_V\Phi\Delta_\Sigma q_R\,\ud\mu_\Sigma.
        \label{eq:quadratic-weighted-identity}
\end{align}
By \eqref{eq:psi-superharmonic} and the estimates on the cutoff function,
\begin{align}
        \Delta_\Sigma q_R
        \leq
        \frac{C}{R^3}
        \mathbf 1_{B_{2R}(0)\setminus B_R(0)}.
\end{align}
Since $|Du|<1$,
\begin{align}
        \mu_\Sigma\left(V\cap B_{2R}(0)\right)
        \leq
        CR^3.
\end{align}
As $0\leq\Phi\leq\tau^2/2$, equation
\eqref{eq:quadratic-weighted-identity} yields
\begin{align}
        \int_V
        \eta_R\psi E
        \,\ud\mu_\Sigma
        \leq C.
\end{align}
Letting $R\to \infty$ and applying Fatou's lemma gives
\begin{align}
        \int_V
        \frac{E}{\sqrt{1+|X|^2}}
        \,\ud\mu_\Sigma
        <\infty.
\end{align}
On the graph of $u$,
\begin{align}
        E=
        \frac{|Du|^2}{1+|Du|^2},
        \qquad
        \ud\mu_\Sigma=
        \sqrt{1+|Du|^2}\,\ud y.
\end{align}
Since $|Du|<1$ and $0<u<\tau$, it follows that
\begin{align}
        \int_D
        \frac{|Du(y)|^2}{1+|y|}
        \,\ud y
        <\infty.
        \label{eq:finite-Du-energy}
\end{align}
It remains to verify the Lipschitz estimate. If the segment joining
$y,z\in D$ is contained in $D$, then
\begin{align}
        |g(y)-g(z)|
        \leq
        |y-z|.
\end{align}
If it leaves $D$, assume $g(y)\geq g(z)$ and let $\xi$ be the first
point of $\partial D$ on the segment starting at $y$. Since
$g(\xi)=0$,
\begin{align}
        |g(y)-g(z)|
        \leq
        g(y)
        =
        |g(y)-g(\xi)|
        \leq
        |y-\xi|
        \leq
        |y-z|.
\end{align}
The cases in which one or both endpoints lie outside $D$ follow in
the same way. Thus $g$ is globally $1$-Lipschitz. Its weak gradient is
$-Du$ almost everywhere on $D$ and zero almost everywhere outside
$D$. Therefore,
\begin{align}
 \int_{\bR^3}
        \frac{|\grad g(y)|^2}{1+|y|}
        \,\ud y=\int_D
        \frac{|Du(y)|^2}{1+|y|}
        \,\ud y. 
\end{align}
Thus \eqref{eq:finite-weighted-base-energy} follows from \eqref{eq:finite-Du-energy}.
\end{proof}

\begin{proof}[Proof of Theorem~\ref{thm:main}]
Set
\begin{align}
        S_0=\pi(S),
        \qquad
        \Gamma_0=\pi(\Gamma).
\end{align}
Since $\pi:\overline V^{\,\Sigma}\rightarrow\overline D$ is a homeomorphism and $S$ and $\Gamma$ are closed, connected, and noncompact, the sets $S_0$ and $\Gamma_0$ are closed, connected, and noncompact in $\overline D$. Since $\overline D$ is closed in $\bR^3$,
both sets are unbounded. Moreover,
\begin{align}
        g=0
        \quad\text{on }S_0,
        \qquad
        g=\tau-t
        \quad\text{on }\Gamma_0.
\end{align}
The sets $ \{|y|:y\in S_0\}$ and $\{|y|:y\in\Gamma_0\}$ are connected unbounded subsets of $[0,\infty)$. Hence there is
$R_0<\infty$ such that $S_0$ and $\Gamma_0$ meet every sphere
$\partial B_r^{\bR^3}(0)$ with $r\geq R_0$. Set
\begin{align}
        a=\tau-t,
        \qquad
        \rho_0=\frac{a}{8}.
\end{align}
For every sufficiently large $r$, choose
\begin{align}
        q_r\in S_0\cap\partial B_r^{\bR^3}(0),
        \qquad
        p_r\in\Gamma_0\cap\partial B_r^{\bR^3}(0).
\end{align}
If $x$ belongs to the geodesic cap of radius $\rho_0$ centered at
$q_r$ in $\partial B_r^{\bR^3}(0)$, then
\begin{align}
        |x-q_r|
        \leq
        d_{\partial B_r}(x,q_r)
        \leq
        \rho_0.
\end{align}
Since $g(q_r)=0$ and $g$ is $1$-Lipschitz,
\begin{align}
        g(x)
        =
        |g(x)-g(q_r)|
        \leq
        |x-q_r|
        \leq
        \rho_0
        =
        \frac{a}{8}.
\end{align}
Similarly, if $x$ belongs to the geodesic cap of radius $\rho_0$
centered at $p_r$, then
\begin{align}
        |g(x)-a|
        =
        |g(x)-g(p_r)|
        \leq
        |x-p_r|
        \leq
        \rho_0
        =
        \frac{a}{8},
\end{align}
and therefore
\begin{align}
        g(x)
        \geq
        a-\frac{a}{8}
        =
        \frac{7a}{8}.
\end{align} 
Let $c,C>0$ be the constants in \cite[Lemma~5.8]{aryanMcWeeney2026topological}. After increasing $R_0$, we may assume that
\begin{align}
        \rho_0<\frac{r}{100},\text{ and } \frac{Cr}{\rho_0}>1
\end{align}
for every $r\geq R_0$. Therefore,
\cite[Lemma~5.8]{aryanMcWeeney2026topological} gives
\begin{align}
        \int_{\partial B_r^{\bR^3}(0)}
        |\grad_{\partial B_r}g|^2
        \,\ud\mathcal H^2
        \geq
        \frac{ca^2}{\log(Cr/\rho_0)}.
\end{align}
Consequently,
\begin{align}
        \int_{\bR^3}
        \frac{|\grad g(y)|^2}{1+|y|}
        \,\ud y
        &\geq
        \int_{R_0}^\infty
        \frac{1}{1+r}
        \int_{\partial B_r^{\bR^3}(0)}
        |\grad_{\partial B_r}g|^2
        \,\ud\mathcal H^2
        \,\ud r
        \\
        &\geq
        ca^2
        \int_{R_0}^\infty
        \frac{\ud r}
        {(1+r)\log(Cr/\rho_0)}
        =
        \infty.
\end{align}
This contradicts \eqref{eq:finite-weighted-base-energy}. Hence $\Sigma$ is proper.
\end{proof}

\bibliographystyle{alpha}
\bibliography{refs}
\end{document}